\newtheorem{thm}{Theorem}[section]
\newtheorem{prop}[thm]{Proposition}
\theoremstyle{definition}
\theoremstyle{remark}
\newtheorem{remark}[thm]{Remark}
\numberwithin{equation}{section}
\newcommand{\dd}{\mathrm{d}}
\newcommand{\ee}{\mathrm{e}}
\newcommand{\rr}{{\mathbb R}}
\newcommand{\eqd}{\stackrel{\rm d}{=}}
\newcommand{\Exp}{\mathbb E}
\newcommand{\Cov}{\operatorname{Cov}}
\newcommand{\arsinh}{\operatorname{Arsinh}}
\newcommand{\cD}{{\mathcal D}}
\newcommand{\distre}{\stackrel{\cD}{=}}
\begin{document}

\sloppy

\title[A link between identities for the Wiener process and its bridge]{A link between Bougerol's identity
 and a formula due to Donati-Martin, Matsumoto and Yor}


\author{M\'aty\'as Barczy}
\address{M\'aty\'as Barczy, Faculty of Informatics, University of Debrecen, Pf.\ 12, H-4010 Debrecen, Hungary}
\email{barczy.matyas\@@{}inf.unideb.hu}

\author{Peter Kern}
\address{Peter Kern, Mathematical Institute, Heinrich-Heine-University D\"usseldorf, Universit\"atsstr.\ 1, D-40225 D\"usseldorf, Germany}
\email{kern\@@{}math.uni-duesseldorf.de}

\date{\today}

\begin{abstract}
We point out an easy link between two striking identities on exponential functionals of the Wiener process and the Wiener bridge originated by Bougerol, and Donati-Martin, Matsumoto and Yor, respectively.
The link is established using a continuous one-parameter family of Gaussian processes known as $\alpha$-Wiener bridges  or scaled Wiener bridges, which in case $\alpha=0$ coincides with a Wiener process and for $\alpha=1$ is a version of the Wiener bridge.
\end{abstract}

\keywords{Bougerol identity, exponential functional, Brownian motion, Brownian bridge, $\alpha$-Wiener bridge, scaled Wiener bridge.}

\subjclass[2010]{Primary 60G15; Secondary 60G44, 60J65.}

\maketitle

\baselineskip=18pt

\section{Introduction}%

Our starting point is Bougerol's identity in \cite{Bou} which states that
\begin{equation}\label{Bougerol}
\sinh(B_t)\eqd W_{A_t}\quad\text{ for every fixed }  t\geq0,
\end{equation}
where $(B_t)_{t\geq0}$ and $(W_t)_{t\geq0}$ are independent standard Wiener processes, $\eqd$ denotes equality in distribution, and
$$ A_t=\int_0^t\exp(2\,B_s)\,\dd s\quad\text{ for }t\geq0.$$
In fact there is also a generalization of Bougerol's identity with equality in law for stochastic processes due to Alili, Dufresne and Yor \cite[Proposition 2]{ADY}; cf.\ also \cite[formula (69)]{Vak} or \cite[page 200]{Yor2}.
Recently, there has been a renewed interest in generalizations of Bougerol's identity \eqref{Bougerol}.
Bertoin et al. \cite{BerDufYor} presented a two-dimensional extension of \eqref{Bougerol} that involves
 some exponentional functional and the local time at $0$ of a standard Wiener process.
For another two-dimensional extension of \eqref{Bougerol}, and even a three-dimensional one we refer to Vakeroudis \cite[Sections 4.2 and 4.3]{Vak}.

We are only interested in the following particular case of the identity \eqref{Bougerol} presented in \cite{Vak,Yor1}.
Bougerol's identity \eqref{Bougerol} is equivalent to the equality of the corresponding continuous Lebesgue densities, which yields
$$\frac1{\sqrt{(1+x^2)t}}\exp\left(-\frac{\arsinh^2(x)}{2t}\right)=\Exp\left[\frac1{\sqrt{A_t}}\exp\left(-\frac{x^2}{2A_t}\right)\right]$$
for all $t>0$ and $x\in\rr$, see, e.g., \cite[formula (1.e)]{Yor1}.
Especially, for $x=0$, by the $1/2$-self-similarity of a standard Wiener process and a change of variables $r=(4/\beta^2)s$ for some $\beta>0$ we get
\begin{equation}\label{Bougerol_gen}\begin{split}
t^{-1/2} & =\Exp\left[\left(\int_0^t\exp(2B_s)\,\dd s\right)^{-1/2}\right]=\Exp\left[\left(\int_0^{t}\exp(\beta B_{(4/\beta^2)s})\,\dd s\right)^{-1/2}\right]\\
& =\frac2\beta\cdot\Exp\left[\left(\int_0^{(4/\beta^2)t}\exp(\beta B_{r})\,\dd r\right)^{-1/2}\right].
\end{split}\end{equation}
Hence, setting
$t=\beta^2/4$ we get for every $\beta>0$
\begin{equation*}
\Exp\left[\left(\int_0^1\exp(\beta B_{s})\,\dd s\right)^{-1/2}\right]=1.
\end{equation*}
This formula is a consequence of Bougerol's identity \eqref{Bougerol}
 which obviously holds for $\beta=0$ and also remains true for $\beta<0$, since $(-B_t)_{t\geq0}$ is a Wiener process, i.e.,
\begin{equation}\label{Bougerol_special}
\Exp\left[\left(\int_0^1\exp(\beta B_{s})\,\dd s\right)^{-1/2}\right]=1\quad\text{ for every }\beta\in\rr.
\end{equation}

A similar identity due to Donati-Martin, Matsumoto and Yor \cite{DMMY1,DMMY2} holds when replacing the Wiener process $(B_t)_{t\geq0}$ by a Wiener bridge $(B_t^\circ=B_t-t\,B_1)_{t\in[0,1]}$, a zero mean Gaussian process with covariance function $\Cov(B_s^\circ,B_t^\circ)=s(1-t)$ for $0\leq s\leq t\leq 1$. Namely, this identity states that
\begin{equation}\label{DMMY}
\Exp\left[\left(\int_0^1\exp(\beta B_{s}^\circ)\,\dd s\right)^{-1}\right]=1\quad\text{ for every }\beta\in\rr.
\end{equation}
Hobson \cite{Hob} provides a simple proof of \eqref{DMMY} using a relationship between a Wiener bridge and a Wiener excursion obtained by Biane \cite{Bia}. A further elementary proof of \eqref{DMMY} is given in \cite[Proposition 2.1]{DMMY1}.

Donati-Martin et al. \cite{DMMY1} already pointed out how to obtain a link between the two identities \eqref{Bougerol_special} and \eqref{DMMY} in the sense that the identity \eqref{Bougerol_special} follows from the identity \eqref{DMMY} as a consequence of a formula combining exponential functionals of the Wiener process and the Wiener bridge, for details we refer to \cite[Proposition 3.2]{DMMY1}.

Our aim is to give a different link between the two identities \eqref{Bougerol_special} and \eqref{DMMY} using so-called $\alpha$-Wiener bridges (also known as scaled Wiener bridges). These processes build a one-parameter family of Gaussian processes for parameter $\alpha\in\rr$. They have been first considered by Brennan and Schwartz \cite{BreSch} and later have been investigated by Mansuy \cite{Man} and Barczy and Pap \cite{BarPap}. For our purposes an $\alpha$-Wiener bridge $(X^{(\alpha)}_t)_{t\in[0,1)}$ can be  defined as a (weak) solution of the stochastic differential equation (SDE)
\begin{equation}\label{aWSDE}
\dd X^{(\alpha)}_t=-\frac{\alpha}{1-t}X^{(\alpha)}_t\,\dd t+\dd B_t,\qquad t\in[0,1),
\end{equation}
with initial condition $X^{(\alpha)}_0=0$. Barczy and Pap \cite{BarPap} have shown that $(X^{(\alpha)}_t)_{t\in[0,1)}$ is a bridge in the sense that $X^{(\alpha)}_t\to0=:X_1^{(\alpha)}$ as $t\uparrow1$ almost surely if and only if $\alpha>0$. Moreover, for $\alpha\geq0$ it is shown in \cite{BarPap} that $(X^{(\alpha)}_t)_{t\in[0,1]}$ is a zero mean Gaussian process with covariance function
\begin{equation}\label{aWCov}
\Cov(X^{(\alpha)}_s,X^{(\alpha)}_t)=\begin{cases}
\frac{(1-s)^\alpha(1-t)^\alpha}{1-2\alpha}\left(1-(1-s)^{1-2\alpha}\right) & \text{ if }\alpha\not=\tfrac12\\
\sqrt{(1-s)(1-t)}\log\left(\frac1{1-s}\right) & \text{ if }\alpha=\tfrac12
\end{cases}
\end{equation}
for $0\leq s\leq t\leq 1$. Note that for fixed $0\leq s\leq t\leq 1$, \eqref{aWCov} is continuous in $\alpha\geq0$, which for $\alpha\to\frac12$ can be easily seen by l'Hospital's rule. The unique strong solution of the SDE \eqref{aWSDE} with initial condition $X^{(\alpha)}_0=0$ is given by
\begin{equation}\label{intrep}
X^{(\alpha)}_t=\int_0^t\left(\frac{1-t}{1-s}\right)^\alpha\,\dd B_s\quad\text{ for }t\in[0,1),
\end{equation}
and shows that $(X^{(0)}_t)_{t\in[0,1]}=(B_t)_{t\in[0,1]}$ and $(X^{(1)}_t)_{t\in[0,1]}\eqd(B^\circ_t)_{t\in[0,1]}$. The latter is due to the fact that both sides of the equation are zero mean Gaussian processes with the same covariance function. Hence, variation of the parameter $\alpha\in[0,1]$ continuously connects the Wiener process for $\alpha=0$ with the Wiener bridge for $\alpha=1$ in the sense that for
 $\alpha,\,\alpha_0\geq0$, the finite dimensional distributions of $(X^{(\alpha)})_{t\in[0,1]}$ converge weakly to those of $(X^{(\alpha_0)})_{t\in[0,1]}$ as $\alpha\to\alpha_0$. This follows directly from the continuity in $\alpha$ of the covariance function \eqref{aWCov} and is the key observation for our link between the identities \eqref{Bougerol_special} and \eqref{DMMY}.

The paper is organized as follows.
We will first show that certain space-time rescalings of an $\alpha$-Wiener bridge either coincide in law with a usual Wiener bridge for $\alpha>\frac12$ or with the Wiener process for $0\leq\alpha<\frac12$, see Proposition \ref{spatim}. Then an application of these space-time rescalings to the dentity \eqref{DMMY} and \eqref{Bougerol_special},  respectively, yields two new identities for certain transformations of exponential functionals of $\alpha$-Wiener bridges which coincide when $\alpha=\frac12$, see Theorem \ref{link}. We further show that a $\frac{1}{2}$-Wiener bridge can be scaled to both, a Wiener bridge and a standard Wiener process, see Proposition \ref{sts12}. As a consequence,
we present another two identities for certain transformations of exponential functionals of $\frac{1}{2}$-Wiener bridges in Theorem \ref{link12}.

\section{Link between the identities}

In the sequel, $\distre$ denotes equality in law for stochastic processes on the space of continuous functions $C([0,1])$ or $C([0,\infty))$, respectively.
\begin{prop}\label{spatim}
(a) For $\alpha>\frac12$ we have
$$\left(\sqrt{2\alpha-1}\,t^{\frac{\alpha-1}{2\alpha-1}}X^{(\alpha)}_{1-t^{1/(2\alpha-1)}}\right)_{t\in[0,1]}{\distre}(X^{(1)}_t)_{t\in[0,1]}.$$
(b) For $0\leq\alpha<\frac12$ we have
$$\left(\sqrt{1-2\alpha}\,(1-t)^{-\frac{\alpha}{1-2\alpha}}
   X^{(\alpha)}_{1-(1-t)^{1/(1-2\alpha)}}\right)_{t\in[0,1]}{\distre}(X^{(0)}_t)_{t\in[0,1]}.$$
\end{prop}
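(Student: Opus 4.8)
The plan is to exploit that both sides of each identity are centered Gaussian processes, so that equality in law reduces to equality of covariance functions (together with a path-continuity remark). The left-hand side is a deterministic space-time rescaling of $X^{(\alpha)}$, which by \eqref{aWCov} is a zero mean Gaussian process, and the right-hand side is $X^{(1)}$ (resp.\ $X^{(0)}$), which is likewise zero mean Gaussian with covariance $s(1-t)$ (resp.\ $\min(s,t)=s$) for $0\leq s\leq t\leq1$. Since the finite dimensional distributions of a centered Gaussian process are determined by its covariance, and both processes will be seen to have continuous paths on $[0,1]$, it suffices to compute the covariance of the rescaled process and check that it matches the target covariance.

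For part (a) I would write $Y_t=\sqrt{2\alpha-1}\,t^{(\alpha-1)/(2\alpha-1)}X^{(\alpha)}_{\tau(t)}$ with the time change $\tau(t)=1-t^{1/(2\alpha-1)}$, which is strictly decreasing with $\tau(0)=1$ and $\tau(1)=0$. Hence for $s\leq t$ one has $\tau(t)\leq\tau(s)$, and I would feed the ordered pair $(\tau(t),\tau(s))$ into \eqref{aWCov} (legitimate since $\alpha\neq\tfrac12$). The crucial simplification is $1-\tau(t)=t^{1/(2\alpha-1)}$, so that $(1-\tau(t))^{1-2\alpha}=t^{-1}$ and the bracket in \eqref{aWCov} becomes $1-t^{-1}=(t-1)/t$. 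After multiplying by the two scaling factors, whose product is $(2\alpha-1)(st)^{(\alpha-1)/(2\alpha-1)}$, the powers of $st$ coming from $(1-\tau(s))^\alpha(1-\tau(t))^\alpha=(st)^{\alpha/(2\alpha-1)}$ combine with the scaling exponent via $\tfrac{\alpha-1}{2\alpha-1}+\tfrac{\alpha}{2\alpha-1}=1$, the prefactor cancels through $\tfrac{2\alpha-1}{1-2\alpha}=-1$, and everything collapses to $\Cov(Y_s,Y_t)=s(1-t)$, the Wiener bridge covariance.

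Part (b) is entirely analogous, now with the \emph{increasing} time change $\tau(t)=1-(1-t)^{1/(1-2\alpha)}$ (so $\tau(0)=0$, $\tau(1)=1$, and $\tau(s)\leq\tau(t)$ for $s\leq t$) and scaling $\sqrt{1-2\alpha}\,(1-t)^{-\alpha/(1-2\alpha)}$. Here the key identity is $1-\tau(s)=(1-s)^{1/(1-2\alpha)}$, giving $(1-\tau(s))^{1-2\alpha}=1-s$, so the bracket $1-(1-\tau(s))^{1-2\alpha}$ in \eqref{aWCov} equals $s$. The powers of $(1-s)$ and $(1-t)$ cancel against the scaling factors, the factor $(1-2\alpha)$ cancels the prefactor, and one is left with $\Cov(Y_s,Y_t)=s=\min(s,t)$, the covariance of the Wiener process $X^{(0)}$.

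I expect the algebra above to be routine; the step requiring genuine care, and the one I would single out as the main obstacle, is the behaviour at the endpoints where $\tau$ reaches the boundary of the domain $[0,1)$ of $X^{(\alpha)}$. In (a) at $t=0$ (and in (b) at $t=1$) the argument of $X^{(\alpha)}$ tends to the terminal time $1$, where $X^{(\alpha)}$ degenerates to $0$ (recall $X^{(\alpha)}_t\to0$ a.s.\ for $\alpha>0$), while the scaling factor blows up when $\tfrac12<\alpha<1$ (resp.\ $0<\alpha<\tfrac12$), producing a $0\cdot\infty$ indeterminacy. The covariance computation already yields the correct limiting variance ($t(1-t)\to0$, resp.\ $\min(s,t)\to1$), which is consistent with the claimed endpoint value; to turn the agreement of finite dimensional distributions into equality in law on $C([0,1])$ I would argue that the rescaled process is a continuous transform of the continuous process $X^{(\alpha)}$ on the open interval and use the continuity of $X^{(\alpha)}$ at the bridge endpoint together with a variance estimate to control the boundary, so that both sides are genuinely continuous centered Gaussian processes with identical covariance, whence they coincide in law.
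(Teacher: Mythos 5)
Your reduction to covariance functions is the same as the paper's, and your covariance computations in both (a) and (b) are correct and identical to those in the paper's proof. The genuine gap lies exactly in the step you yourself single out as the main obstacle: almost sure continuity of the rescaled process at the degenerate endpoint ($t=0$ in (a) for $\tfrac12<\alpha<1$, and $t=1$ in (b) for $0<\alpha<\tfrac12$). The two tools you propose cannot close it. Continuity of $X^{(\alpha)}$ at the bridge endpoint only gives $X^{(\alpha)}_{\tau(t)}\to0$ a.s.\ with no rate, which says nothing once multiplied by a prefactor tending to infinity; and a variance estimate only yields convergence in $L^2$, hence in probability, which does not imply a.s.\ convergence of the path along a continuum of times (for instance $B_t/\sqrt{2t\log\log(1/t)}\to0$ in probability as $t\downarrow0$, yet its $\limsup$ equals $1$ a.s.). Moreover, in (b) the endpoint limit is not $0$ but a nondegenerate normal variable of variance $1$ (as your own covariance computation shows), so the ``continuity at the bridge endpoint'' heuristic is not even the relevant fact there: what is needed is that $(1-t)^{-\alpha}X^{(\alpha)}_t$ has an a.s.\ finite limit as $t\uparrow1$, a nontrivial statement.

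The paper fills this gap with quantitative inputs: in (a) it applies L\'epingle's theorem \cite{Lep} (or \cite{RY}, Ch.~V, Ex.~1.16) to the martingale $M_t=(1-t)^{-\alpha}X^{(\alpha)}_t$, whose quadratic variation diverges as $t\uparrow1$, obtaining $M_t=o(\langle M\rangle_t^{3/4})$ a.s., a rate strong enough to beat the exploding scaling factor; in (b) it invokes the result from the proof of \cite[Lemma 3.1]{BarPap} that $(1-t)^{-\alpha}X^{(\alpha)}_t$ converges a.s.\ to a (normal) limit. If you wanted to avoid such inputs, a soft argument would also work: your covariance computation already shows that the rescaled process restricted to the open interval has the law of a Wiener bridge on $(0,1]$ (resp.\ a Wiener process on $[0,1)$); since convergence of the path at the boundary point is an event determined by countably many coordinates, the a.s.\ boundary behaviour transfers from the known process to the rescaled one (in (b), equivalently, the rescaled process is an $L^2$-bounded continuous martingale on $[0,1)$ and hence converges a.s.). Some argument of one of these kinds is indispensable; as written, your proposal asserts the endpoint continuity rather than proving it.
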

\begin{proof}
We will first prove that the processes under consideration are zero mean Gaussian processes having almost surely continuous trajectories, which is not obvious for the left-hand sides as $t\downarrow0$ for $\alpha\in(\frac12,1)$ in (a), and as $t\uparrow1$ in (b), respectively. Once we know this, it remains to show the equality of covariance functions. \\
(a) Let $(M_t)_{t\in[0,1)}$ be the continuous martingale part of the process $X^{(\alpha)}$ given by \eqref{intrep}
$$M_t:=\frac{X_t^{(\alpha)}}{(1-t)^\alpha}=\int_0^t\frac1{(1-s)^\alpha}\,\dd B_s \quad \text{ for } t\in[0,1)$$
with quadratic variation $\langle M\rangle_t=(1-(1-t)^{1-2\alpha})/(1-2\alpha)\to\infty$ as $t\uparrow1$ for $\alpha>\frac12$ as obtained in \cite[formula (3.1)]{BarPap}. Then, similarly to the proof of \cite[Lemma 3.1]{BarPap}, for the increasing function $[1,\infty)\ni x\mapsto f(x)=x^{3/4}$ with $\int_1^\infty(f(x))^{-2}\,\dd x<\infty$, an application of \cite[Theoreme 1]{Lep} or Exercise 1.16 in Chapter V of \cite{RY} gives $M_t/f(\langle M\rangle_t)\to0$ a.s.\ as $t\uparrow1$. Letting $t=1-s^{1/(2\alpha-1)}\uparrow1$ as $s\downarrow0$ this shows
$$\frac{s^{\frac{-\alpha}{2\alpha-1}}X_{1-s^{1/(2\alpha-1)}}^{(\alpha)}}{\big((1-s^{-1})/(1-2\alpha)\big)^{3/4}}\to0\quad\text{ a.s.\ as  }s\downarrow0.$$
To obtain $s^{\frac{\alpha-1}{2\alpha-1}}X^{(\alpha)}_{1-s^{1/(2\alpha-1)}}\to0$ a.s. as $s\downarrow0$ it suffices to see that for $s\downarrow0$ we have
$$s^{\frac{\alpha-1}{2\alpha-1}}s^{\frac{\alpha}{2\alpha-1}}({s^{-1} - 1})^{3/4}=s({s^{-1} - 1})^{3/4}=s^{1/4}({1 - s})^{3/4}\to0.$$
Hence the centered Gaussian processes under consideration almost surely have continuous sample paths on $[0,1]$ starting in the origin. Thus it remains to show the equality of their covariance functions for $0<s\leq t\leq1$. Using \eqref{aWCov} and the fact that the function $(0,1]\ni t\mapsto 1-t^{1/(2\alpha-1)}$ is decreasing, we get for $0<s\leq t\leq1$
\begin{align*}
& \Cov\left(X^{(\alpha)}_{1-s^{1/(2\alpha-1)}},X^{(\alpha)}_{1-t^{1/(2\alpha-1)}}\right)=\frac{s^{\frac{\alpha}{2\alpha-1}}t^{\frac{\alpha}{2\alpha-1}}}{1-2\alpha}\,(1-t^{-1})\\
& \quad=\frac{s^{\frac{\alpha}{2\alpha-1}}t^{\frac{\alpha}{2\alpha-1}-1}}{2\alpha-1}\,(1-t)=\frac{s^{\frac{1-\alpha}{2\alpha-1}}t^{\frac{1-\alpha}{2\alpha-1}}}{2\alpha-1}\,s(1-t)
\end{align*}
from which the assertion easily follows.\\
(b) In case $\alpha=0$ the identity is trivially fulfilled. For $0<\alpha<\frac12$ it is shown in the proof of \cite[Lemma 3.1]{BarPap} that $\lim_{t\uparrow1}(1-t)^{-\alpha}X_t^{(\alpha)}$ exists in $\rr$ almost surely and has a normal distribution as a limit of normally distributed random variables. Letting $t=1-(1-s)^{1/(1-2\alpha)}\uparrow1$ as $s\uparrow1$ we have
$$\lim_{s\uparrow1}(1-s)^{-\frac{\alpha}{1-2\alpha}}X^{(\alpha)}_{1-(1-s)^{1/(1-2\alpha)}}\quad\text{exists a.s.,}$$
which shows that the centered Gaussian processes under consideration almost surely have continuous sample paths on $[0,1]$ starting in the origin. Thus it remains to show the equality of their covariance functions. Using \eqref{aWCov} and the fact that the function $[0,1]\ni t\mapsto 1-(1-t)^{1/(1-2\alpha)}$ is increasing, we get for $0\leq s\leq t\leq1$
\begin{align*}
& \Cov\left(X^{(\alpha)}_{1-(1-s)^{1/(1-2\alpha)}},X^{(\alpha)}_{1-(1-t)^{1/(1-2\alpha)}}\right)=\frac{(1-s)^{\frac{\alpha}{1-2\alpha}}(1-t)^{\frac{\alpha}{1-2\alpha}}}{1-2\alpha}\,s
\end{align*}
from which again the assertion easily follows.
\end{proof}
\begin{thm}\label{link}
(a) For $\alpha>\frac12$ and any $\beta\in\rr$ we have
$$\Exp\left[\left(\int_0^1\exp\left(\frac{\beta}{(1-s)^{1-\alpha}}\,X^{(\alpha)}_s\right)\,\frac{\dd s}{(1-s)^{2(1-\alpha)}}\right)^{-1}\right]=2\alpha-1.$$
(b) For $0\leq\alpha<\frac12$ and any $\beta\in\rr$ we have
$$\Exp\left[\left(\int_0^1\exp\left(\frac{\beta}{(1-s)^{\alpha}}\,X^{(\alpha)}_s\right)\,\frac{\dd s}{(1-s)^{2\alpha}}\right)^{-1/2}\right]=\sqrt{1-2\alpha}.$$
(c) For $\alpha=\frac12$ and any $\beta\in\rr$ both identities in (a) and (b) hold.
\end{thm}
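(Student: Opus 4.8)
The plan is to obtain parts (a) and (b) directly from Proposition~\ref{spatim}, by pushing the known identities \eqref{DMMY} and \eqref{Bougerol_special} through the stated space--time rescalings and performing a change of variables in the inner integral, and to treat the degenerate case $\alpha=\tfrac12$ in (c) separately by showing that the relevant exponential functional is almost surely infinite. For part (a) I would start from \eqref{DMMY}, which together with $(X^{(1)}_t)_{t\in[0,1]}\distre(B^\circ_t)_{t\in[0,1]}$ gives $\Exp[(\int_0^1\exp(\beta X^{(1)}_t)\,\dd t)^{-1}]=1$. By Proposition~\ref{spatim}(a) the process $Y_t:=\sqrt{2\alpha-1}\,t^{(\alpha-1)/(2\alpha-1)}X^{(\alpha)}_{1-t^{1/(2\alpha-1)}}$ satisfies $(Y_t)_{t\in[0,1]}\distre(X^{(1)}_t)_{t\in[0,1]}$, so the same identity holds with $X^{(1)}$ replaced by $Y$. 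I would then substitute $s=1-t^{1/(2\alpha-1)}$, i.e.\ $t=(1-s)^{2\alpha-1}$; this sends $t\in(0,1)$ to $s\in(0,1)$ with reversed orientation, turns the prefactor into $(1-s)^{\alpha-1}$ and produces $\dd t=(2\alpha-1)(1-s)^{2\alpha-2}\,\dd s$, so that $\int_0^1\exp(\beta Y_t)\,\dd t=(2\alpha-1)\int_0^1\exp(\beta\sqrt{2\alpha-1}\,(1-s)^{\alpha-1}X^{(\alpha)}_s)\,(1-s)^{-2(1-\alpha)}\,\dd s$. Taking reciprocals and expectations lets the factor $2\alpha-1$ come out, and replacing $\beta\sqrt{2\alpha-1}$ by $\beta$ (legitimate since the identity is claimed for all $\beta\in\rr$) yields the assertion.

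Part (b) is entirely analogous, now starting from \eqref{Bougerol_special} together with $(X^{(0)}_t)_{t\in[0,1]}=(B_t)_{t\in[0,1]}$, applying Proposition~\ref{spatim}(b), and substituting $s=1-(1-t)^{1/(1-2\alpha)}$ (an orientation-preserving change). The identical bookkeeping produces a Jacobian factor $1-2\alpha$ inside the integral; since the outer power is now $-\tfrac12$, this factor emerges as $(1-2\alpha)^{-1/2}$, and after the rescaling $\beta\sqrt{1-2\alpha}\mapsto\beta$ one obtains the stated constant $\sqrt{1-2\alpha}$.

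The genuinely different case is (c). Here $2\alpha-1=0=\sqrt{1-2\alpha}$, the exponents $1/(2\alpha-1)$ and $1/(1-2\alpha)$ blow up, and Proposition~\ref{spatim} degenerates, so instead I would show that both integrals vanish because the common inner integral is almost surely infinite. For $\alpha=\tfrac12$ the integrand in both (a) and (b) is $\exp(\beta(1-s)^{-1/2}X^{(1/2)}_s)\,(1-s)^{-1}$, and the key observation is that $(1-s)^{-1/2}X^{(1/2)}_s=M_s$ is precisely the martingale from the proof of Proposition~\ref{spatim}, with $\langle M\rangle_s=\log(1/(1-s))\uparrow\infty$ as $s\uparrow1$. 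By the Dambis--Dubins--Schwarz time change $M_s=W_{\langle M\rangle_s}$ for a standard Wiener process $W$, and the substitution $u=\log(1/(1-s))$ (so that $\dd s/(1-s)=\dd u$) gives $\int_0^1\exp(\beta M_s)\,(1-s)^{-1}\,\dd s=\int_0^\infty\exp(\beta W_u)\,\dd u$. It then suffices to prove $\int_0^\infty\exp(\beta W_u)\,\dd u=\infty$ a.s.\ for every $\beta\in\rr$: for $\beta>0$ this follows from recurrence of $W$, since $W$ spends infinite total Lebesgue time in $\{W>0\}$ and hence $\int_0^\infty\exp(\beta W_u)\,\dd u\ge\int_0^\infty\mathbf 1_{\{W_u>0\}}\,\dd u=\infty$, while $\beta<0$ is handled by the symmetry $W\mapsto-W$ and $\beta=0$ trivially. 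Consequently the quantities inside both expectations are a.s.\ equal to $0$, matching the right-hand sides $2\alpha-1=0$ and $\sqrt{1-2\alpha}=0$.

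I expect the main obstacle to be the rigorous justification in part (c) that $\int_0^\infty\exp(\beta W_u)\,\dd u=\infty$ almost surely; the change-of-variables bookkeeping in (a) and (b) is routine, whereas here one must argue the almost-sure infinite occupation time of a level carefully, for instance via the strong Markov property of $W$ at the first hitting time of $0$ combined with the fact that Brownian motion spends infinite time on the positive half-line.
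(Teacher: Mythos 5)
Your proposal is correct. Parts (a) and (b) coincide with the paper's own argument: the same applications of Proposition~\ref{spatim} to \eqref{DMMY} and \eqref{Bougerol_special}, the same changes of variables $s=1-t^{1/(2\alpha-1)}$ and $s=1-(1-t)^{1/(1-2\alpha)}$, and the same absorption of the factor $\sqrt{2\alpha-1}$ (resp.\ $\sqrt{1-2\alpha}$) into the free parameter $\beta$. In part (c) you also follow the paper up to the Dambis--Dubins--Schwarz reduction of the common inner integral to $\int_0^\infty\exp(\beta W_u)\,\dd u$, but you finish differently: you argue pathwise that this integral is almost surely infinite, via recurrence and the infinite occupation time of the positive half-line, whereas the paper reuses the quantitative consequence \eqref{Bougerol_gen} of Bougerol's identity together with monotone convergence to get $\Exp\bigl[\bigl(\int_0^T\exp(\beta\tilde B_t)\,\dd t\bigr)^{-1/2}\bigr]=T^{-1/2}\to0$, concludes that the non-negative $(-1/2)$-power random variable has zero mean and is hence a.s.\ zero, and then obtains the $(-1)$-power identity of (a) as its vanishing second moment. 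Your ending is more elementary and self-contained (it does not invoke Bougerol's identity a second time); the paper's is shorter given that \eqref{Bougerol_gen} is already available, and it sidesteps exactly the point you flag as the main obstacle. That obstacle is in any case not a serious one: the a.s.\ divergence of $A_\infty:=\int_0^\infty\mathbf{1}_{\{W_u>0\}}\,\dd u$ follows cleanly from Brownian scaling, since $A_t:=\int_0^t\mathbf{1}_{\{W_u>0\}}\,\dd u\eqd t\,A_1$ with $A_1>0$ a.s., so that $P(A_\infty\le K)\le P(A_1\le K/t)\to0$ as $t\to\infty$ for every fixed $K$; alternatively, the durations of the successive excursions above $0$ straddling visits to level $1$ are i.i.d.\ and strictly positive by the strong Markov property, so their sum diverges. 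Either of these makes your sketch rigorous, and then both expectations vanish, matching $2\alpha-1=0=\sqrt{1-2\alpha}$.
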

\begin{remark}
For the $\frac12$-Wiener bridge the two identities in (a) and (b) of Theorem \ref{link} are valid by part (c) and are in fact equivalent, since both identities show that for any $\beta\in\rr$ the non-negative random variable
$$Y(\beta):=\left(\int_0^1\exp\left(\frac{\beta}{\sqrt{1-s}}\,X^{(1/2)}_s\right)\,\frac{\dd s}{1-s}\right)^{-1/2}=0\quad\text{ almost surely.}$$
Hence the version of the Bougerol identity in (b) represents the mean $\Exp[Y(\beta)]=0$, whereas the formula (a),
 as a version of the identity due to Donati-Martin, Matsumoto and Yor, represents the second moment $\Exp[(Y(\beta))^2]=0$.
\end{remark}
\begin{proof}[Proof of Theorem \ref{link}]
(a) An application of Proposition \ref{spatim} (a) to \eqref{DMMY} together with a change of variables $s=1-t^{\frac1{2\alpha-1}}$ yields for any $\beta\in\rr$
\begin{align*}
1 & =\Exp\left[\left(\int_0^1\exp(\beta X^{(1)}_t)\,\dd t\right)^{-1}\right]\\
& =\Exp\left[\left(\int_0^1\exp\left(\beta \sqrt{2\alpha-1}\,t^{\frac{\alpha-1}{2\alpha-1}}X^{(\alpha)}_{1-t^{1/(2\alpha-1)}}\right)\,\dd t\right)^{-1}\right]\\
& =\Exp\left[\left(\int_0^1\exp\left(\frac{\tilde\beta}{(1-s)^{1-\alpha}}\,X^{(\alpha)}_s\right)\cdot(2\alpha-1)\,\frac{\dd s}{(1-s)^{2(1-\alpha)}}\right)^{-1}\right],
\end{align*}
where $\tilde\beta=\beta\sqrt{2\alpha-1}\in\rr$ is arbitrary.\\
(b) For $\alpha=0$ the identity is a restatement of \eqref{Bougerol_special}. For $0<\alpha<\frac12$ an application of Proposition \ref{spatim} (b) to \eqref{Bougerol_special} together with a change of variables $s=1-(1-t)^{1/(1-2\alpha)}$ yields for any $\beta\in\rr$
\begin{align*}
1 & =\Exp\left[\left(\int_0^1\exp(\beta X^{(0)}_t)\,\dd t\right)^{-1/2}\right]\\
& =\Exp\left[\left(\int_0^1\exp\left(\beta \sqrt{1-2\alpha}\,(1-t)^{-\frac{\alpha}{1-2\alpha}}X^{(\alpha)}_{1-(1-t)^{1/(1-2\alpha)}}\right)\,\dd t\right)^{-1/2}\right]\\
& =\Exp\left[\left(\int_0^1\exp\left(\frac{\tilde\beta}{(1-s)^{\alpha}}\,X^{(\alpha)}_s\right)\cdot(1-2\alpha)\,\frac{\dd s}{(1-s)^{2\alpha}}\right)^{-1/2}\right],
\end{align*}
where $\tilde\beta=\beta\sqrt{1-2\alpha}\in\rr$ is arbitrary.\\
(c) For $\alpha=\frac12$ the process $(M_t)_{t\in[0,1)}$ with $M_t=(1-t)^{-1/2}X_t^{(1/2)}=\int_0^t(1-s)^{-1/2}\dd B_s$ is a centered continuous martingale with quadratic variation $\langle M\rangle_t=-\log(1-t)\to\infty$ as $t\uparrow1$; see formulas (3.1) and (3.2) in \cite{BarPap}. Hence by the Dambis, Dubins-Schwarz theorem there exists a Wiener process $(\tilde B_t)_{t\geq0}$ such that $(M_t)_{t\in[0,1)}=(\tilde B_{\langle M\rangle_t})_{t\in[0,1)}$ almost surely; see Theorem 1.6 in Chapter V of \cite{RY}.
It follows by a change of variables $t=\langle M\rangle_s=-\log(1-s)$ and monotone convergence that for $\beta\not=0$
\begin{align*}
& \Exp\left[\left(\int_0^1\exp\left(\frac{\beta}{\sqrt{1-s}}\,X^{(1/2)}_s\right)\,\frac{\dd s}{1-s}\right)^{-1/2}\right]\\
& \quad=\Exp\left[\left(\int_0^1\exp\big(\beta\tilde B_{-\log(1-s)}\big)\,\frac{\dd s}{1-s}\right)^{-1/2}\right]=\Exp\left[\left(\int_0^\infty\exp\big(\beta\tilde B_{t}\big)\,\dd t\right)^{-1/2}\right]\\
& \quad=\lim_{T\to\infty}\Exp\left[\left(\int_0^T\exp\big(\beta\tilde B_{t}\big)\,\dd t\right)^{-1/2}\right]=\lim_{T\to\infty}T^{-1/2}=0,
\end{align*}
where the last but one equality follows by setting $t=\beta^2T/4$ in \eqref{Bougerol_gen}.
Since in case $\beta=0$ the expectation is obviously vanishing, this shows that the identity in (b) is fulfilled for $\alpha=\frac12$. In particular it shows that a non-negative random variable has zero expectation and thus is equal to zero almost surely. Hence also its second moment vanishes, which proves the identity in (a) for $\alpha=\frac12$.
\end{proof}
In case $\alpha=\frac12$ it is possible to link the $\frac12$-Wiener bridge $(X^{(1/2)}_t)_{t\in[0,1]}$ to both identities \eqref{DMMY}
 and \eqref{Bougerol_special} with non-vanishing expectation by either introducing an additional $\log$-term in the integrand or by integrating over a smaller domain as follows. We first present the corresponding space-time scalings, which might be of independent interest.
\begin{prop}\label{sts12}
We have
\begin{align}
& \left(t\,\sqrt{\exp(t^{-1}-1)}X^{(1/2)}_{1-\exp(1-t^{-1})}\right)_{t\in[0,1]}{\distre}(X^{(1)}_t)_{t\in[0,1]}.\label{spatim12DMMY}\\
& \left(\ee^{t/2}X^{(1/2)}_{1-{\exp(-t)}}\right)_{t\geq0}{\distre}(X^{(0)}_t)_{t\geq0}.\label{spatim12Bou}
\end{align}
\end{prop}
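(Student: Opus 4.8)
The plan is to follow the scheme of the proof of Proposition \ref{spatim}: identify each left-hand side as a centered Gaussian process with almost surely continuous trajectories on the full index set, and then match covariance functions with the right-hand side. As in the proof of Theorem \ref{link}(c), I would work with the Dambis, Dubins-Schwarz representation of the $\frac12$-Wiener bridge. Writing $M_v=(1-v)^{-1/2}X^{(1/2)}_v=\int_0^v(1-r)^{-1/2}\,\dd B_r$ for $v\in[0,1)$, this is a centered continuous martingale with quadratic variation $\langle M\rangle_v=-\log(1-v)\to\infty$ as $v\uparrow1$, so there is a standard Wiener process $(\tilde B_t)_{t\geq0}$ with $M_v=\tilde B_{-\log(1-v)}$ almost surely.

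For \eqref{spatim12Bou} this representation settles everything at once. The substitution $v=\ee^{-t}$ in the form $v=1-\ee^{-t}$ gives $-\log(1-v)=t$ and $1-v=\ee^{-t}$, whence
$$\ee^{t/2}X^{(1/2)}_{1-\ee^{-t}}=(1-v)^{-1/2}X^{(1/2)}_v=M_v=\tilde B_t\qquad\text{for }t\geq0.$$
Thus the left-hand side of \eqref{spatim12Bou} is, almost surely, the standard Wiener process $\tilde B$, which is a version of $(X^{(0)}_t)_{t\geq0}$; in particular it has continuous paths on $[0,\infty)$, so no boundary issue arises.

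For \eqref{spatim12DMMY} I would substitute $v=1-\exp(1-t^{-1})$, so that $-\log(1-v)=t^{-1}-1$ and $\exp(t^{-1}-1)=(1-v)^{-1}$. Denoting the left-hand side by $(Y_t)_{t\in[0,1]}$, this yields
$$Y_t=t\,(1-v)^{-1/2}X^{(1/2)}_v=t\,M_v=t\,\tilde B_{t^{-1}-1}\qquad\text{for }t\in(0,1],$$
with $Y_0:=0$. The process $(Y_t)_{t\in(0,1]}$ is clearly centered Gaussian with continuous paths, and $Y_1=\tilde B_0=0$. The only delicate point is continuity as $t\downarrow0$: setting $r=t^{-1}-1\to\infty$, one has $Y_t=\frac{r}{r+1}\cdot\frac{\tilde B_r}{r}\to0$ almost surely by the strong law $\tilde B_r/r\to0$, so $Y$ is a centered Gaussian process with almost surely continuous paths on $[0,1]$ starting at the origin. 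It then remains to match covariances: for $0<s\leq t\leq1$ one has $s^{-1}-1\geq t^{-1}-1$, hence
$$\Cov(Y_s,Y_t)=st\,\Cov\big(\tilde B_{s^{-1}-1},\tilde B_{t^{-1}-1}\big)=st\,(t^{-1}-1)=s(1-t),$$
which is precisely the covariance of the Wiener bridge $(X^{(1)}_t)_{t\in[0,1]}$, proving \eqref{spatim12DMMY}.

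The main obstacle is the boundary behaviour as $t\downarrow0$ in \eqref{spatim12DMMY}, where the time change pushes $v$ to the singular endpoint $1$ of the bridge; this is overcome by the strong law of large numbers for the Dambis, Dubins-Schwarz Wiener process $\tilde B$. All remaining computations, namely the two covariance evaluations and the direct identification in \eqref{spatim12Bou}, are routine consequences of \eqref{aWCov} and the quadratic variation formula $\langle M\rangle_v=-\log(1-v)$.
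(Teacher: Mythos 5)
Your proof is correct and takes essentially the same route as the paper: the Dambis--Dubins--Schwarz representation established in the proof of Theorem~\ref{link}(c), the strong law of large numbers for the resulting Wiener process to handle the boundary behaviour as $t\downarrow0$ in \eqref{spatim12DMMY}, and a covariance comparison. The only deviation is cosmetic and harmless: for \eqref{spatim12Bou} you identify the left-hand side pathwise as $\tilde B$ itself (a slightly stronger, almost-sure statement), whereas the paper simply matches covariances via \eqref{aWCov}.
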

\begin{proof}
We first show that as $t\downarrow0$ we have
\begin{equation}\label{cont12}
t\,\sqrt{\exp(t^{-1}-1)}X^{(1/2)}_{1-\exp(1-t^{-1})}\to0\quad\text{ a.s.}
\end{equation}
From the proof of part (c) of Theorem \ref{link} we know that there exists a Wiener process $(\tilde B_t)_{t\geq0}$ such that $({(1-s)^{-1/2}}X_s^{(1/2)})_{s\in[0,1)}=(\tilde B_{-\log(1-s)})_{s\in[0,1)}$ almost surely. Letting $s=1-\exp(1-t^{-1})$ we get
$$\left(\sqrt{\exp(t^{-1}-1)}X^{(1/2)}_{1-\exp(1-t^{-1})}\right)_{t\in(0,1]}=\left(\tilde B_{t^{-1}-1}\right)_{t\in(0,1]}\quad\text{ a.s.}$$
from which \eqref{cont12} follows by the strong law of large numbers for Brownian motion, since almost surely
$$t\,\sqrt{\exp(t^{-1}-1)}X^{(1/2)}_{1-\exp(1-t^{-1})}=t\,\tilde B_{t^{-1}-1}=(1-t)\,\frac{t}{1-t}\,\tilde B_{\frac{1-t}{t}}\to0$$
as $t\downarrow0$. Hence the centered Gaussian processes under consideration in \eqref{spatim12DMMY} almost surely have continuous sample paths on $[0,1]$ starting in the origin. Thus it remains to show the equality of their covariance functions for $0<s\leq t\leq1$.
Using \eqref{aWCov} and the fact that the function $(0,1]\ni t \mapsto 1-\exp(1-t^{-1})$ is decreasing, we get for any $0<s\leq t\leq1$,
\begin{align*}
\Cov\left(X^{(1/2)}_{1-\exp(1-s^{-1})},X^{(1/2)}_{1-\exp(1-t^{-1})}\right) & =\sqrt{\exp(1-s^{-1})}\sqrt{\exp(1-t^{-1})}(t^{-1}-1)\\
& =\frac{\sqrt{\exp(1-s^{-1})}\sqrt{\exp(1-t^{-1})}}{s\cdot t}\,s(1-t),
\end{align*}
from which \eqref{spatim12DMMY} easily follows. Similarly, for any $0\leq s\leq t$ we get using \eqref{aWCov}
\begin{align*}
\Cov\left(X^{(1/2)}_{1-{\exp(-s)}},X^{(1/2)}_{1-{\exp(-t)}}\right) & =\ee^{-s/2}\ee^{-t/2}\,s
\end{align*}
from which \eqref{spatim12Bou} easily follows.
\end{proof}
\begin{thm}\label{link12}
For any $\beta\in\rr$ we have
\begin{align*}
& \Exp\left[\left(\int_0^1\exp\left(\frac{\beta}{\sqrt{1-s}\,(1-\log(1-s))}\,X^{(1/2)}_s\right)\,\frac{\dd s}{(1-s)\,(1-\log(1-s))^{2}}\right)^{-1}\right]=1\\
\intertext{and}
& \Exp\left[\left(\int_0^{1-\ee^{-1}}\exp\left(\frac{\beta}{\sqrt{1-s}}\,X^{(1/2)}_s\right)\,\frac{\dd s}{1-s}\right)^{-1/2}\right]=1.
\end{align*}
\end{thm}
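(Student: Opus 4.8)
The plan is to mirror the proof of Theorem \ref{link}: each of the two identities will follow by feeding one of the space-time scalings from Proposition \ref{sts12} into a known identity and then performing a single change of variables. The key structural point is that here the scalings carry the $\frac12$-Wiener bridge \emph{exactly} onto a genuine Wiener bridge (resp.\ a genuine Wiener process), so no degeneracy occurs and, in contrast to Theorem \ref{link}, the right-hand sides remain equal to $1$.

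For the first identity I would start from \eqref{DMMY} applied to $(X^{(1)}_t)_{t\in[0,1]}$ and substitute the distributional equality \eqref{spatim12DMMY}, obtaining
$$1=\Exp\left[\left(\int_0^1\exp\left(\beta\,t\,\sqrt{\exp(t^{-1}-1)}\,X^{(1/2)}_{1-\exp(1-t^{-1})}\right)\dd t\right)^{-1}\right].$$
Then I would substitute $s=1-\exp(1-t^{-1})$, equivalently $t=(1-\log(1-s))^{-1}$, which is a decreasing bijection of $[0,1]$ onto itself. A direct computation gives $\exp(t^{-1}-1)=(1-s)^{-1}$ and, after accounting for the orientation, $\dd t=(1-s)^{-1}(1-\log(1-s))^{-2}\dd s$. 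Hence the exponent becomes $\beta\,(1-s)^{-1/2}(1-\log(1-s))^{-1}X^{(1/2)}_s$ and the integrating measure becomes $(1-s)^{-1}(1-\log(1-s))^{-2}\dd s$, which is precisely the first claimed identity.

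For the second identity I would restrict the process equality \eqref{spatim12Bou} to $[0,1]$ and apply it to \eqref{Bougerol_special} (recall that $X^{(0)}=B$), yielding
$$1=\Exp\left[\left(\int_0^1\exp\left(\beta\,\ee^{t/2}X^{(1/2)}_{1-\exp(-t)}\right)\dd t\right)^{-1/2}\right].$$
Here I would substitute $s=1-\exp(-t)$, equivalently $t=-\log(1-s)$, an increasing bijection of $[0,1]$ onto $[0,1-\ee^{-1}]$, which is exactly what produces the shifted upper limit. Since $\ee^{t/2}=(1-s)^{-1/2}$ and $\dd t=(1-s)^{-1}\dd s$, the expression collapses to the second claimed identity.

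The computations are entirely routine, so I expect no genuine obstacle: Proposition \ref{sts12} already supplies both the almost sure sample-path continuity and the distributional identity of the underlying processes. The only point requiring care is correctly tracking the Jacobian and the transformed exponent through each substitution, together with the observation that, because the scalings are nonlinear in $t$ rather than constant multiples of $\beta$, they are fully absorbed into the transformed integrand and leave behind no multiplicative constant—this is exactly why the right-hand sides stay equal to $1$.
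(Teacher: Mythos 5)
Your proposal is correct and follows essentially the same route as the paper's own proof: each identity is obtained by applying the corresponding scaling from Proposition \ref{sts12} to \eqref{DMMY} (resp.\ \eqref{Bougerol_special}) and performing the single change of variables $s=1-\exp(1-t^{-1})$ (resp.\ $s=1-\ee^{-t}$), with the Jacobians you computed matching the paper's displayed calculation exactly.
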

\begin{proof}
Applying \eqref{spatim12DMMY} to \eqref{DMMY} together with a change of variables $s=1-\ee^{-(t^{-1}-1)}$ yields for any $\beta\in\rr$
\begin{align*}
1 & =\Exp\left[\left(\int_0^1\exp(\beta X^{(1)}_t)\,\dd t\right)^{-1}\right]\\
& =\Exp\left[\left(\int_0^1\exp\left(\beta t\,\sqrt{\exp(t^{-1}-1)}X^{(1/2)}_{1-\exp(1-t^{-1})}\right)\,\dd t\right)^{-1}\right]\\
& =\Exp\left[\left(\int_0^1\exp\left(\frac{\beta}{\sqrt{1-s}\,(1-\log(1-s))}\,X^{(1/2)}_s\right)\,\frac{\dd s}{(1-s)\,(1-\log(1-s))^{2}}\right)^{-1}\right]
\end{align*}
which proves the first identity. Similarly, an application of \eqref{spatim12Bou} to \eqref{Bougerol_special} together with a change of variables $s=1-\ee^{-t}$ yields for any $\beta\in\rr$
\begin{align*}
1 & =\Exp\left[\left(\int_0^1\exp(\beta X^{(0)}_t)\,\dd t\right)^{-1/2}\right]\\
& =\Exp\left[\left(\int_0^1\exp\left(\beta\,\ee^{t/2}X^{(1/2)}_{1-\ee^{-t}}\right)\,\dd t\right)^{-1/2}\right]\\
& =\Exp\left[\left(\int_0^{1-\ee^{-1}}\exp\left(\frac{\beta}{\sqrt{1-s}}\,X^{(1/2)}_s\right)\,\frac{\dd s}{1-s}\right)^{-1/2}\right]
\end{align*}
which proves the second identity.
\end{proof}

\begin{remark}\label{open}
Motivated by the identities \eqref{Bougerol_special} and \eqref{DMMY}, one can formulate the open question
 whether there exists a (continuous) function $p:[0,1]\to(-\infty,0)$ such that
$$\Exp\left[\left( \int_0^1 \exp\left(\beta X^{(\alpha)}_t\right)\,\dd t\right)^{p(\alpha)}\right] = 1 \quad\text{ for every }\beta\in\rr.$$
\end{remark}

\bibliographystyle{plain}

\end{document}